\newcommand{\subjclassname@later}{\textup{2010} Mathematics Subject Classification}
\newtheorem{theorem}{Theorem}[section]
\newtheorem{corollary}{Corollary}[section]
\theoremstyle{definition}
\newtheorem{remark}{Remark}[section]
\numberwithin{equation}{section} \numberwithin{theorem}{section}
\DeclareMathOperator{\RE}{Re}
\journal{Applied Mathematics Letters} 
\begin{document}
\begin{frontmatter}

\title{Coefficient estimates for bi-univalent Ma-Minda starlike\\
 and convex functions\tnoteref{t1}}

\tnotetext[t1]{The work presented here was supported in part by RU
and FRGS research grants from Universiti Sains Malaysia, and
University of Delhi.   The authors are thankful to the referee
 for the comments.}

\author[rma]{Rosihan M. Ali\corref{cor1}}
\address[rma]{School of Mathematical Sciences,
Universiti Sains Malaysia, 11800 USM, Penang, Malaysia}
\ead{rosihan@cs.usm.my}

\author[rma]{See Keong Lee}  \ead{sklee@cs.usm.my }

\author[rma,vr]{V. Ravichandran}
\address[vr]{Department of Mathematics, University of Delhi,
Delhi--110 007, India} \ead{vravi@maths.du.ac.in}

\author[rma]{Shamani Supramaniam}
\ead{sham105@hotmail.com}

\cortext[cor1]{Corresponding author}

\begin{abstract}Estimates on the initial
coefficients are obtained for normalized analytic  functions $f$ in the open unit disk with $f$ and its inverse $g=f^{-1}$ satisfying the
conditions that $zf'(z)/f(z)$ and $zg'(z)/g(z)$ are both subordinate to a starlike univalent function whose range is symmetric with respect to
the real axis. Several related classes of functions are also considered, and connections to earlier known results are made.
\end{abstract}

\begin{keyword}Univalent functions\sep bi-univalent
functions\sep bi-starlike functions\sep bi-convex functions\sep subordination

\MSC[2010]{Primary: 30C45, 30C50; Secondary: 30C80}
\end{keyword}

\end{frontmatter}

\section{Introduction} Let $\mathcal{A}$ be the class of all analytic
functions $f$ in the open unit disk $\mathbb{D} : = \{ z \in
\mathbb{C} : |z| < 1 \}$ and normalized by the conditions $f(0)= 0$
and $f'(0)= 1$. The Koebe one-quarter theorem \cite{duren}  ensures
that the image of $\mathbb{D}$ under every univalent function
$f\in\mathcal{A}$ contains a disk of radius 1/4. Thus every
univalent function $f$ has an inverse $f^{- 1}$ satisfying $f^{-
1}(f(z))= z$, $(z \in \mathbb{D})$ and
\[ f(f^{- 1}(w))= w, \quad  \left(|w| < r_0(f), r_0(f)
   \geq 1 / 4 \right). \]
A function $f \in \mathcal{A}$ is said to be bi-univalent in $\mathbb{D}$ if both $f $ and $f^{- 1}$ are univalent in $\mathbb{D}$. Let $\sigma$
denote the class of bi-univalent functions defined in the unit disk $\mathbb{D}$. A domain $D \subset \mathbb{C}$ is \textit{convex} if the line
segment joining any two points in $D$ lies entirely in $D,$ while a domain is \textit{starlike} with respect to a point $w_0 \in D$ if the line
segment joining any point of $D$ to $w_0$ lies inside $D$. A function $f \in \mathcal{A}$ is starlike if $f( \mathbb{D})$ is a starlike domain
with respect to the origin, and convex if $f (\mathbb{D})$ is convex. Analytically, $f \in \mathcal{A}$ is starlike if and only if
$\text{\textrm{Re}}\ zf'(z)/ f(z)> 0$, whereas $f \in \mathcal{A}$ is convex if and only if $1 + \text{\textrm{Re}}\ zf''(z )/ f'(z)> 0$. The
classes consisting of starlike and convex functions are denoted by $\mathcal{ST}$ and $\mathcal{CV}$ respectively. The classes
$\mathcal{ST}(\alpha)$ and $\mathcal{CV}(\alpha)$ of starlike and convex functions of order $\alpha$, $0 \leq \alpha < 1$, are respectively
characterized by $\text{\textrm{Re}}\ zf'(z)/ f(z)> \alpha$ and $1 + \text{\textrm{Re}} \
 zf''(z)/ f'(z)> \alpha$. Various subclasses of
starlike and convex functions are often investigated. These functions are typically characterized by the quantity $zf'(z)/ f (z)$ or $1 +
zf''(z)/ f'(z)$ lying in a certain domain starlike with respect to 1 in the right-half plane. Subordination is useful to unify these subclasses.

An analytic function $f$ is subordinate to an analytic function $g$, written $f(z)\prec g(z)$, provided there is an analytic function $w$
defined on $\mathbb{D}$ with $w(0)= 0$ and $|w(z)| < 1$ satisfying $f(z)= g(w(z))$. Ma and Minda {\cite{mamin}} unified various subclasses of
starlike and convex functions for which either of the quantity $zf'(z)/ f(z)$ or $1 + zf''(z)/ f'(z)$ is subordinate to a more general
superordinate function. For this purpose, they considered an analytic function $\varphi$ with positive real part in the unit disk $\mathbb{D}$,
$\varphi(0)= 1$, $\varphi'(0)> 0$, and $\varphi$ maps $\mathbb{D}$ onto a region starlike with respect to $1$ and symmetric with respect to the
real axis. The class of Ma-Minda starlike functions consists of functions $f \in \mathcal{A}$ satisfying the subordination $zf'(z)/ f(z)\prec
\varphi( z)$. Similarly, the class of Ma-Minda convex functions consists of functions $f \in \mathcal{A}$ satisfying the subordination $1 +
zf''(z)/ f'(z)\prec \varphi(z)$. A function $f$ is bi-starlike of Ma-Minda type or bi-convex of Ma-Minda type if both $f$ and $f^{- 1}$ are
respectively Ma-Minda starlike or convex. These classes are denoted respectively by $\mathcal{ST}_{\sigma}(\varphi)$ and
$\mathcal{CV}_{\sigma}(\varphi)$.

Lewin {\cite{lewin}} investigated the class $\sigma$ of bi-univalent functions and obtained the bound for the second coefficient. Several
authors have subsequently studied similar problems in this direction (see {\cite{netan69,brankir}}). Brannan and Taha {\cite{brantaha}}
considered certain subclasses of bi-univalent functions, similar to the familiar subclasses of univalent functions consisting of strongly
starlike, starlike and convex functions. They introduced bi-starlike functions and bi-convex functions and obtained estimates on the initial
coefficients. Recently, Srivastava \emph{et al.}\ {\cite{sriv}} introduced and investigated subclasses of bi-univalent functions and obtained
bounds for the initial coefficients. Bounds for the initial coefficients of several classes of functions were also investigated in
\cite{rma1,rma2,vravi,pon1,pon2}.

In this paper, estimates on the initial coefficients for bi-starlike
of Ma-Minda type and bi-convex of Ma-Minda type functions are
obtained. Several related classes are also considered, and
connection to earlier known results are made. The classes introduced
in this paper are motivated by the corresponding classes
investigated  in \cite{rma2}.

\section{Coefficient Estimates}
In the sequel, it is assumed that $\varphi$ is an analytic function
with positive real part in the unit disk $\mathbb{D}$, with
$\varphi(0)= 1$, $\varphi'(0)> 0$,  and  $\varphi(\mathbb{D})$ is
symmetric   with respect to the real axis. Such a function has a
series expansion of the form
\begin{equation}
  \label{varphi1} \varphi(z)= 1 + B_1 z + B_2 z^2 + B_3 z^3 + \cdots , \quad (B_1>0).
\end{equation}
A function $f\in\mathcal{A}$ with $\RE(f'(z))>0$ is known to be
univalent. This motivates the following class of functions. A
function $f \in \sigma$ is said to be in
  the class $\mathcal{H}_{\sigma}(\varphi)$ if the following subordinations
  hold:
  \[ f'(z)\prec \varphi(z)\quad  \text{and} \quad  g'(w
)\prec \varphi(w), \quad g(w): = f^{- 1}(w).\] For functions in the class $\mathcal{H}_{\sigma}(\varphi)$, the following result is obtained.

\begin{theorem}
  If $f \in \mathcal{H}_{\sigma}(\varphi)$ is given by
  \begin{equation}
  \label{eqf} f(z)= z + \sum_{n = 2}^{\infty} a_n z^n ,
\end{equation}
 then
  \begin{equation}
    \label{re1} |a_2 | \leq \frac{B_1\sqrt{B_1}}{\sqrt{|3 B_1^2 - 4 B_2 + 4 B_1|}}
    \quad  \text{and} \quad  |a_3 | \leq  \left(\frac{1}{3} +
    \frac{B_1}{4} \right)B_1.
  \end{equation}
\end{theorem}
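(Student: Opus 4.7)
The plan is to translate both subordinations $f'\prec\varphi$ and $g'\prec\varphi$ into Schwarz functions $u(z)=p_1z+p_2z^2+\cdots$ and $v(w)=q_1w+q_2w^2+\cdots$, so that $f'(z)=\varphi(u(z))$ and $g'(w)=\varphi(v(w))$. I will need the expansion of the inverse
\[
g(w)=f^{-1}(w)=w-a_2w^2+(2a_2^2-a_3)w^3+\cdots,
\]
and the resulting $g'(w)=1-2a_2w+3(2a_2^2-a_3)w^2+\cdots$. Plugging the series \eqref{varphi1} into $\varphi\circ u$ and $\varphi\circ v$ and matching coefficients through order two on each side yields the four identities
\[
2a_2=B_1p_1,\qquad 3a_3=B_1p_2+B_2p_1^2,\qquad -2a_2=B_1q_1,\qquad 3(2a_2^2-a_3)=B_1q_2+B_2q_1^2.
\]

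From the first and third identities one reads off $p_1=-q_1$ and hence $p_1^2=4a_2^2/B_1^2$. Adding the second and fourth and substituting $p_1^2=q_1^2=4a_2^2/B_1^2$ gives
\[
\bigl(6B_1^2-8B_2\bigr)a_2^2 = B_1^3(p_2+q_2).
\]
The classical Schwarz-lemma estimate $|p_2|\le 1-|p_1|^2$, $|q_2|\le 1-|q_1|^2$ yields $|p_2+q_2|\le 2-8|a_2|^2/B_1^2$. Taking absolute values above, moving all $|a_2|^2$ terms to one side, and simplifying, produces the desired bound on $|a_2|$.

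For $|a_3|$ I will subtract rather than add: subtracting the second and fourth identities cancels the $B_2p_1^2$ term and gives
\[
6a_3-6a_2^2 = B_1(p_2-q_2),\qquad\text{so}\qquad |a_3|\le |a_2|^2+\frac{B_1}{3}\bigl(1-|p_1|^2\bigr)\le |a_2|^2+\frac{B_1}{3}.
\]
Combining this with the crude bound $|a_2|\le B_1/2$ coming directly from $2a_2=B_1p_1$ and $|p_1|\le 1$ yields $|a_3|\le B_1^2/4+B_1/3=(1/3+B_1/4)B_1$.

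The whole argument is essentially bookkeeping plus Schwarz; the one delicate point is the $|a_2|$ bound, where one must resist the temptation to estimate $|p_2|$ alone. The correct move is to exploit the fact that both subordinations force $|p_1|=|q_1|=2|a_2|/B_1$, so using $|p_2|\le 1-|p_1|^2$ and $|q_2|\le 1-|q_1|^2$ simultaneously after adding the two identities feeds $|a_2|^2$ back into the right-hand side; that is what produces the denominator $|3B_1^2-4B_2+4B_1|$ rather than the weaker $|3B_1^2-4B_2|$. For $|a_3|$, it is mildly surprising that the trivial estimate $|a_2|\le B_1/2$ is exactly what is needed, so no sharper input on $|a_2|^2$ is required.
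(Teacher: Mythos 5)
Your proof is correct, and its skeleton --- the four coefficient identities obtained by matching $f'=\varphi\circ u$ and $g'=\varphi\circ v$ through order two, the relation $p_1=-q_1$, adding the second-order identities to isolate $a_2^2$ and subtracting them to isolate $a_3$ --- is exactly the paper's. Where you diverge is in the estimation step for $|a_2|$. The paper transfers the Schwarz functions to functions of positive real part (coefficients bounded by $2$), moves every $a_2^2$-term to the left \emph{before} taking absolute values, and obtains the exact formula $a_2^2=B_1^3(b_2+c_2)/\bigl(4(3B_1^2-4B_2+4B_1)\bigr)$, from which the stated bound with the single modulus $|3B_1^2-4B_2+4B_1|$ in the denominator follows. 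You instead stay with the Schwarz coefficients and invoke the sharp inequality $|p_2|\le 1-|p_1|^2$, feeding $|a_2|^2$ back into the right-hand side after taking absolute values; this yields $|a_2|^2\le B_1^3/\bigl(|3B_1^2-4B_2|+4B_1\bigr)$. Since $B_1>0$, the triangle inequality gives $|3B_1^2-4B_2|+4B_1\ge|3B_1^2-4B_2+4B_1|$, so your estimate is at least as strong as (and in general strictly stronger than) the one claimed --- but note that what your computation literally produces is not the displayed denominator, so you should add that one line to conclude the theorem as stated. For $|a_3|$ the two arguments are identical up to the change of variables: your bounds $|p_2-q_2|\le 2$ and $|p_1|\le 1$ correspond precisely to the paper's $|c_2-b_2|\le 4$ and $|c_1|\le 2$, and both use the same crude $|a_2|\le B_1/2$ for the quadratic term.
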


\begin{proof}
  Let $f \in \mathcal{H}_{\sigma}(\varphi)$ and $g=f^{-1}$. Then there are analytic functions
  $u, v : \mathbb{D} \rightarrow \mathbb{D}$, with $u(0)= v(0)= 0$,
  satisfying
  \begin{equation}
    \label{eq1} f'(z)= \varphi(u(z))\quad  \text{and}
    \quad  g'(w)= \varphi(v(w)).
  \end{equation}
  Define the functions $p_1$ and $p_2$ by
  \begin{equation*}
    \label{eqpu} p_1(z): = \frac{1 + u(z)}{1 - u(z)}=1 + c_1 z + c_2 z^2 +
     \cdots \quad  \text{and} \quad
  p_2(z): = \frac{1 + v(z )}{1 - v(z)}= 1+ b_1 z + b_2 z^2 + \cdots, \end{equation*} or, equivalently,
  \begin{equation}
    \label{equ} u(z)= \frac{p_1(z)- 1}{p_1(z)+ 1}=\frac{1}{2} \left( c_1 z
     + \left(c_2 - \frac{c_1^2}{2} \right)z^2 + \cdots \right)\end{equation} and
     \begin{equation}\label{eqv} v(z)= \frac{p_2(z)- 1}{p_2(z)+ 1}=\frac{1}{2} \left( b_1
     z     + \left(b_2 - \frac{b_1^2}{2} \right)z^2 + \cdots \right) .\end{equation}
Then $p_1$ and $p_2$ are analytic in $\mathbb{D}$ with $p_1(0)=1=p_2(0)$. Since $u, v : \mathbb{D} \rightarrow \mathbb{D}$, the functions $p_1$
and $p_2$ have positive real part in $\mathbb{D}$, and $|b_i|\leq 2 $ and $|c_i|\leq 2$.
  In view of {\eqref{eq1}},  \eqref{equ} and \eqref{eqv}, clearly
  \begin{equation}
    \label{eq2} f'(z)= \varphi \left(\frac{p_1(z)- 1}{p_1(z)+ 1}
    \right)\quad  \text{and} \quad  g'(w)= \varphi \left(
    \frac{p_2(w)- 1}{p_2(w)+ 1} \right).
  \end{equation}
Using \eqref{equ} and \eqref{eqv} together with {\eqref{varphi1}},
it is evident that
  \begin{equation}
    \label{var1} \varphi \left(\frac{p_1(z)- 1}{p_1(z)+ 1} \right)=
    1 + \frac{1}{2} B_1 c_1 z + \left( \frac{1}{2} B_1 \left(c_2 -
    \frac{c_1^2}{2} \right)+ \frac{1}{4} B_2 c_1^2 \right) z^2 + \cdots
  \end{equation}
  and
  \begin{equation}
    \label{var2} \varphi \left(\frac{p_2(w)- 1}{p_2(w)+ 1} \right)=
    1 + \frac{1}{2} B_1 b_1 w + \left( \frac{1}{2} B_1 \left(b_2 -
    \frac{b_1^2}{2} \right)+ \frac{1}{4} B_2 b_1^2 \right) w^2 + \cdots .
  \end{equation}
Since $f \in \mathcal{\sigma}$ has the  Maclaurin series given by
\eqref{eqf}, a computation shows that its inverse $g=f^{- 1}$ has
the expansion
\[ g(w)=f^{- 1}(w)= w -a_2 w^2 +(2 a_2^2 - a_3)w^3 + \cdots . \]
  Since
  \[ f'(z)= 1 + 2 a_2 z + 3 a_3 z^2 + \cdots \quad  \text{and}
     \quad  g'(w)= 1 - 2 a_2 w + 3(2 a_2^2 - a_3)w^2 + \cdots,
  \]
  it follows from {\eqref{eq2}}, {\eqref{var1}} and {\eqref{var2}} that
  \begin{equation}
    \label{eq1.1} 2 a_2 = \frac{1}{2} B_1 c_1,
  \end{equation}
  \begin{equation}
    \label{eq1.2} 3 a_3 = \frac{1}{2} B_1 \left(c_2 - \frac{c_1^2}{2} \right)
    + \frac{1}{4} B_2 c_1^2,
  \end{equation}
  \begin{equation}
    \label{eq1.3} - 2 a_2 = \frac{1}{2} B_1 b_1
  \end{equation}
  and
  \begin{equation}
    \label{eq1.4} 3(2 a_2^2 - a_3)= \frac{1}{2} B_1 \left(b_2 -
    \frac{b_1^2}{2} \right)+ \frac{1}{4} B_2 b_1^2 .
  \end{equation}
  From {\eqref{eq1.1}} and {\eqref{eq1.3}}, it follows that
  \begin{equation}
    \label{eq1.5} c_1 = - b_1.
  \end{equation}
  Now {\eqref{eq1.2}}, {\eqref{eq1.3}}, {\eqref{eq1.4}} and {\eqref{eq1.5}}
 yield
  \[ a_2^2 = \frac{B_1^3(b_2 + c_2)}{4(3 B_1^2 - 4 B_2 + 4 B_1)}, \]
  which, in view of the well-known inequalities $|b_2 | \leq 2$ and
  $|c_2 | \leq 2$ for functions
  with positive real part,  gives us the desired estimate on $|a_2 |$ as asserted in
  {\eqref{re1}}.

  By subtracting  {\eqref{eq1.4}} from {\eqref{eq1.2}}, further computations using
  \eqref{eq1.1} and \eqref{eq1.5} lead to
  \[ a_3 = \frac{1}{12} B_1(c_2 - b_2)+ \frac{1}{16} B_1^2 c_1^2, \]
  and this yields the estimate given in {\eqref{re1}}.
\end{proof}

\begin{remark}
  For the class of strongly starlike functions, the function $\varphi$ is given by
  \[ \varphi(z)= \left(\frac{1 + z}{1 - z} \right)^{\gamma} = 1 + 2
     \gamma z + 2 \gamma^2 z^2 + \cdots \quad (0<\gamma\leq 1), \]
  which gives $B_1 = 2 \gamma$ and $B_2 = 2 \gamma^2$. Hence the inequalities
  in {\eqref{re1}} reduce to the result in \cite[Theorem 1, inequality (2.4), p.3]{sriv}.
  In the case
  \[ \varphi(z)= \frac{1 +(1 - 2 \gamma)z}{1 - z} = 1 + 2(1 - \gamma
)z + 2(1 - \gamma)z^2 + \cdots, \]
  then $B_1 = B_2 = 2(1 - \gamma)$, and thus the inequalities in
  {\eqref{re1}} reduce to the result in \cite[Theorem 2, inequality (3.3), p.4]{sriv}.
\end{remark}

A function $f \in \sigma$ is said to be in  the class
$\mathcal{ST}_{\sigma}(\alpha, \varphi)$, $\alpha\geq 0$, if the
following subordinations  hold:
  \[ \frac{zf'(z)}{f(z)}+\frac{\alpha z^{2}f''(z)}{f(z)} \prec \varphi(z)\quad
  \text{and} \quad  \frac{wg'(w)}{g(w)} + \frac{\alpha w^{2} g''(w)}{g(w)}
  \prec \varphi(w), \quad g(w): = f^{- 1}(w).\]  Note that
$\mathcal{ST}_{\sigma}(\varphi)\equiv
\mathcal{ST}_{\sigma}(0,\varphi)$. For functions in the class
$\mathcal{ST}_{\sigma}(\alpha, \varphi)$, the following coefficient
estimates are obtained.

\begin{theorem}\label{the2}
  Let $f$ given by \eqref{eqf} be in the  class $\mathcal{ST}_{\sigma}
(\alpha, \varphi)$. Then
  \begin{equation}
    \label{new1} |a_2 | \leq \frac{B_1\sqrt{B_1}}{\sqrt{|B_1^2(1+4\alpha) + (B_1 -
    B_2)(1+2\alpha)^2}|},
  \end{equation}
  and
  \begin{equation}
    \label{new2} |a_3 | \leq \frac{B_1 + |B_2 - B_1 |}{(1+4\alpha)}.
  \end{equation}
\end{theorem}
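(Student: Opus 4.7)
The plan is to parallel the structure of the preceding proof, adapted to the subordinations defining $\mathcal{ST}_{\sigma}(\alpha,\varphi)$. First I would write
$$\frac{zf'(z)}{f(z)}+\frac{\alpha z^{2}f''(z)}{f(z)}=\varphi(u(z)),\qquad \frac{wg'(w)}{g(w)}+\frac{\alpha w^{2}g''(w)}{g(w)}=\varphi(v(w))$$
for Schwarz functions $u,v$, introduce the Carath\'eodory functions $p_{1},p_{2}$ exactly as in \eqref{equ}--\eqref{eqv}, and expand both sides as power series. The right-hand sides are again given by \eqref{var1} and \eqref{var2}. On the left, using $g(w)=w-a_{2}w^{2}+(2a_{2}^{2}-a_{3})w^{3}+\cdots$ and the standard quotient expansion, matching coefficients of $z, z^{2}$ and $w, w^{2}$ produces four relations analogous to \eqref{eq1.1}--\eqref{eq1.4}:
\begin{gather*}
(1+2\alpha)a_{2}=\tfrac{1}{2}B_{1}c_{1},\qquad -(1+2\alpha)a_{2}=\tfrac{1}{2}B_{1}b_{1},\\
(2+6\alpha)a_{3}-(1+2\alpha)a_{2}^{2}=\tfrac{1}{2}B_{1}\bigl(c_{2}-\tfrac{c_{1}^{2}}{2}\bigr)+\tfrac{1}{4}B_{2}c_{1}^{2},\\
(3+10\alpha)a_{2}^{2}-(2+6\alpha)a_{3}=\tfrac{1}{2}B_{1}\bigl(b_{2}-\tfrac{b_{1}^{2}}{2}\bigr)+\tfrac{1}{4}B_{2}b_{1}^{2}.
\end{gather*}

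The first pair yields $c_{1}=-b_{1}$, so $c_{1}^{2}=b_{1}^{2}$. Adding the two second-order relations kills $a_{3}$ and produces $(2+8\alpha)a_{2}^{2}=\tfrac{1}{2}B_{1}(b_{2}+c_{2})+\tfrac{1}{2}(B_{2}-B_{1})c_{1}^{2}$. Substituting $c_{1}^{2}=4(1+2\alpha)^{2}a_{2}^{2}/B_{1}^{2}$ from the first relation and solving for $a_{2}^{2}$ gives a closed form from which \eqref{new1} follows immediately using $|b_{2}|,|c_{2}|\le 2$.

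For \eqref{new2} the symmetric subtraction used in the previous proof yields only $a_{3}=a_{2}^{2}+B_{1}(c_{2}-b_{2})/[8(1+3\alpha)]$, and inserting $a_{2}^{2}$ from the first relation then contributes an extraneous $B_{1}^{2}$ term with denominator $1+2\alpha$ that cannot reduce to $(B_{1}+|B_{2}-B_{1}|)/(1+4\alpha)$. The key move is instead to eliminate $a_{2}^{2}$ directly between the two second-order relations by the asymmetric weighted combination $(3+10\alpha)\cdot\text{(first second-order)}+(1+2\alpha)\cdot\text{(second second-order)}$. Using $c_{1}^{2}=b_{1}^{2}$, the $a_{2}^{2}$ terms cancel exactly and one obtains
$$4(1+3\alpha)(1+4\alpha)\,a_{3}=\tfrac{1}{2}B_{1}\bigl[(3+10\alpha)c_{2}+(1+2\alpha)b_{2}\bigr]+(1+3\alpha)(B_{2}-B_{1})c_{1}^{2}.$$
Since $(3+10\alpha)+(1+2\alpha)=4(1+3\alpha)$, the triangle inequality with $|c_{1}|,|b_{2}|,|c_{2}|\le 2$ bounds the first bracket by $8B_{1}(1+3\alpha)$, and dividing produces \eqref{new2}. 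The only non-routine step is identifying this weighting: the coefficients of $a_{2}^{2}$ in the two second-order relations are $-(1+2\alpha)$ and $+(3+10\alpha)$, not $\pm 1$ as for $\mathcal{H}_{\sigma}(\varphi)$, so the symmetric subtraction trick of the previous proof cannot deliver \eqref{new2}.
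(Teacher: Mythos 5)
Your proposal is correct and follows essentially the same route as the paper: the same four coefficient relations, $c_1=-b_1$, addition of the two second-order equations with $c_1^2=4(1+2\alpha)^2a_2^2/B_1^2$ to get $a_2^2$, and the weighted combination $(3+10\alpha)$/$(1+2\alpha)$ of the second-order relations, which is exactly the "further computation" the paper leaves implicit and which reproduces its displayed formula $a_3=\bigl[(B_1/2)((3+10\alpha)c_2+(1+2\alpha)b_2)+b_1^2(1+3\alpha)(B_2-B_1)\bigr]/\bigl[4(1+3\alpha)(1+4\alpha)\bigr]$. Your observation that the symmetric subtraction used for $\mathcal{H}_\sigma(\varphi)$ cannot yield \eqref{new2} here is accurate, but it is a clarification of, not a departure from, the paper's argument.
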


\begin{proof}
Let $f\in \mathcal{ST}_{\sigma}(\alpha, \varphi)$.
  Then there are analytic
  functions $u, v : \mathbb{D} \rightarrow \mathbb{D}$, with $u(0)= v(0
)= 0$, satisfying
  \begin{equation}
    \label{newth1} \frac{zf'(z)}{f(z)}+\frac{\alpha z^{2}f''(z)}{f(z)} = \varphi(u(z))\quad
    \text{and} \quad  \frac{wg'(w)}{g(w)} + \frac{\alpha w^{2} g''(w)}{g(w)} = \varphi(v(w)), \quad (g=f^{-1}).
  \end{equation}
  Since
  \[ \frac{zf'(z)}{f(z)}+\frac{\alpha z^{2}f''(z)}{f(z)} = 1 + a_2(1+2\alpha) z
  +(2(1+3\alpha) a_3 -(1+2\alpha) a_2^2)z^2 + \cdots \]
  and
  \[  \frac{wg'(w)}{g(w)} + \frac{\alpha w^{2} g''(w)}{g(w)}
  = 1 -(1+2\alpha) a_2 w +((3+10\alpha) a_2^2 - 2(1+3\alpha) a_3)w^2 + \cdots,
  \]
then {\eqref{var1}}, {\eqref{var2}} and {\eqref{newth1}} yield
  \begin{equation}
    \label{eq2.1n} a_2(1+2\alpha) = \frac{1}{2} B_1 c_1,
  \end{equation}
  \begin{equation}
    \label{eq2.2n} 2(1+3\alpha) a_3 -(1+2\alpha) a_2^2 = \frac{1}{2} B_1 \left(c_2 - \frac{c_1^2}{2}
    \right)+ \frac{1}{4} B_2 c_1^2,
  \end{equation}
  \begin{equation}
    \label{eq2.3n} - (1+2\alpha)a_2 = \frac{1}{2} B_1 b_1
  \end{equation}
  and
  \begin{equation}
    \label{eq2.4n} (3+10\alpha) a_2^2 - 2(1+3\alpha) a_3 = \frac{1}{2} B_1 \left(b_2 -
    \frac{b_1^2}{2} \right)+ \frac{1}{4} B_2 b_1^2 .
  \end{equation}
 It follows from {\eqref{eq2.1n}} and {\eqref{eq2.3n}} that
  \begin{equation}
    \label{eq2.5n} c_1 = - b_1.
  \end{equation}
  Equations {\eqref{eq2.2n}}, {\eqref{eq2.3n}}, {\eqref{eq2.4n}} and
  {\eqref{eq2.5n}} lead to
  \[ a_2^2 = \frac{B_1^3(b_2 + c_2)}{4(B_1^2(1+4\alpha) + (B_1 - B_2)(1+2\alpha)^2)}, \]
  which,  in view of the  inequalities $|b_2 | \leq 2$ and
  $|c_2 | \leq 2$ for functions
  with positive real part,  yield
\[ |a_2|^2 \leq  \frac{B_1^3}{|B_1^2(1+4\alpha) + (B_1 - B_2)(1+2\alpha)^2|}. \]
Since $B_1>0$, the last inequality, upon taking square roots, gives
the desired estimate on $|a_2 |$ given in  {\eqref{new1}}.

Now, further computations from {\eqref{eq2.2n}}, {\eqref{eq2.3n}},
{\eqref{eq2.4n}} and {\eqref{eq2.5n}} lead to
\[ a_3 = \frac{({B_1}/2)((3+10\alpha) c_2 + (1+2\alpha)b_2)+  b_1^2 (1+3\alpha)(B_2 -
  B_1)}{4(1+3\alpha)(1+4\alpha)}, \] which, using  the  inequalities $|b_1|\leq 2$, $|b_2 | \leq 2$ and
$|c_2 | \leq 2$ for functions  with positive real part,   yields
\[
|a_3| \leq  \frac{({B_1}/2)(2(3+10\alpha) + 2(1+2\alpha))+ 4
(1+3\alpha)(B_2 -
  B_1)}{4(1+3\alpha)(1+4\alpha)}
=  \frac{B_1 + |B_2 - B_1 |}{(1+4\alpha)}.\] This completes the
proof of the estimate in  {\eqref{new2}}.
\end{proof}

For  $\alpha=0$, Theorem~\ref{the2} readily yields the following coefficient estimates
for Ma-Minda bi-starlike functions.

\begin{corollary}
  Let $f$ given by \eqref{eqf} be in the class $\mathcal{ST}_{\sigma}
(\varphi)$. Then
  \begin{equation*}
|a_2 | \leq \frac{B_1\sqrt{B_1}}{\sqrt{|B_1^2 + B_1 -
    B_2|}}\quad \text{ and } \quad
  |a_3 | \leq B_1 + |B_2 - B_1 |.
  \end{equation*}
\end{corollary}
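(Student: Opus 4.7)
The plan is to derive this corollary as an immediate specialization of Theorem~\ref{the2} to the value $\alpha = 0$. The paper has already recorded the class identification $\mathcal{ST}_{\sigma}(\varphi) \equiv \mathcal{ST}_{\sigma}(0,\varphi)$, and indeed setting $\alpha = 0$ in the defining subordinations
\[
\frac{zf'(z)}{f(z)} + \frac{\alpha z^2 f''(z)}{f(z)} \prec \varphi(z), \qquad \frac{wg'(w)}{g(w)} + \frac{\alpha w^2 g''(w)}{g(w)} \prec \varphi(w),
\]
makes the second terms vanish, recovering the standard Ma--Minda bi-starlike conditions. Hence any $f \in \mathcal{ST}_{\sigma}(\varphi)$ is eligible to have the conclusions \eqref{new1} and \eqref{new2} of Theorem~\ref{the2} applied to it.

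The only remaining task is a numerical substitution. With $\alpha = 0$ one has $1 + 2\alpha = 1$, $1 + 3\alpha = 1$, $1 + 4\alpha = 1$, and hence $(1+2\alpha)^2 = 1$ as well. Plugging these into \eqref{new1} gives
\[
|a_2| \leq \frac{B_1\sqrt{B_1}}{\sqrt{\bigl|B_1^2 \cdot 1 + (B_1 - B_2) \cdot 1\bigr|}} = \frac{B_1\sqrt{B_1}}{\sqrt{|B_1^2 + B_1 - B_2|}},
\]
which is the first asserted bound. Similarly, \eqref{new2} becomes
\[
|a_3| \leq \frac{B_1 + |B_2 - B_1|}{1} = B_1 + |B_2 - B_1|,
\]
yielding the second asserted bound.

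There is no genuine obstacle here; the proof is a one-line appeal to Theorem~\ref{the2}. The only matter worth a brief sanity check is the reduction of the defining subordinations, which was noted above, so that the hypothesis of Theorem~\ref{the2} really is satisfied when $f \in \mathcal{ST}_{\sigma}(\varphi)$. With that observation in place, the corollary follows by direct substitution.
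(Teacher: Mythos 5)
Your proposal is correct and matches the paper exactly: the paper obtains this corollary by setting $\alpha=0$ in Theorem~\ref{the2}, using the identification $\mathcal{ST}_{\sigma}(\varphi)\equiv\mathcal{ST}_{\sigma}(0,\varphi)$, just as you do. The substitutions $1+2\alpha=1+3\alpha=1+4\alpha=1$ immediately give both stated bounds.
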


\begin{remark}
  For the class of strongly starlike functions, the function $\varphi$ is given by
  \[ \varphi(z)= \left(\frac{1 + z}{1 - z} \right)^{\gamma} = 1 + 2
     \gamma z + 2 \gamma^2 z^2 + \cdots \quad (0<\gamma\leq 1), \]
 and so $B_1 = 2 \gamma$ and $B_2 = 2 \gamma^2$. Hence, when $\alpha=0$ (bi-starlike function),
 the inequality in   {\eqref{new1}} reduces to the estimates in {\cite[Theorem 2.1]{brantaha}}. On the other hand,
  when $\alpha=0$ and
  \[ \varphi(z)= \frac{1 +(1 - 2 \gamma)z}{1 - z} = 1 + 2(1 - \gamma
)z + 2(1 - \gamma)z^2 + \cdots, \]
  then $B_1 = B_2 = 2(1 - \gamma)$ and thus the inequalities in
  {\eqref{new1}} and {\eqref{new2}} reduce to the estimates in {\cite[Theorem 3.1]{brantaha}}.
\end{remark}

Next, a  function $f \in \sigma$ belongs to the class
$\mathcal{M}_{\sigma}(\alpha, \varphi)$, $\alpha\geq0$, if the
following subordinations  hold:
  \[ (1-\alpha)\frac{zf'(z)}{f(z)}+\alpha\left(1+\frac{ zf''(z)}{f'(z)}\right) \prec \varphi(z)\] and
   \[(1-\alpha) \frac{wg'(w)}{g(w)} + \alpha\left(1+\frac{ w g''(w)}{g'(w)}\right)
  \prec \varphi(w),\] $g(w): = f^{- 1}(w)$. A function in the class $\mathcal{M}_{\sigma}(\alpha, \varphi)$
  is called bi-Mocanu-convex function of Ma-Minda type. This class unifies the
classes $\mathcal{ST}_{\sigma}(\varphi)$ and
$\mathcal{CV}_{\sigma}(\varphi)$.

For functions in the class $\mathcal{M}_{\sigma}(\alpha, \varphi)$,
the following coefficient estimates hold.

\begin{theorem}\label{the3.1}
  Let $f$ given by \eqref{varphi1} be in the class $\mathcal{M}_{\sigma}(\alpha, \varphi)$. Then
  \begin{equation}
    \label{re5n} |a_2 | \leq \frac{B_1\sqrt{B_1}}{\sqrt{(1+\alpha)|B_1^2 + (1+\alpha)( B_1 - B_2)|}}
  \end{equation}
  and
  \begin{equation}
    \label{re6n} |a_3 | \leq \frac{ B_1 + |B_2 - B_1 |}{1+\alpha}.
  \end{equation}
\end{theorem}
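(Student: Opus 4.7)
The plan is to mirror the strategy of the preceding two theorems: introduce Schwarz functions associated to the two subordinations, pass to Carath\'eodory-type functions $p_{1},p_{2}$, and compare Taylor coefficients to extract a four-equation system in $a_{2},a_{3}$ and $b_{1},b_{2},c_{1},c_{2}$.

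First I would write $f\in\mathcal{M}_{\sigma}(\alpha,\varphi)$ as
\[
(1-\alpha)\frac{zf'(z)}{f(z)}+\alpha\!\left(1+\frac{zf''(z)}{f'(z)}\right)=\varphi(u(z)),
\]
with the analogous identity for $g=f^{-1}$ in terms of $v$. Defining $p_{1},p_{2}$ exactly as in the proof of Theorem 1.1 (through \eqref{equ}--\eqref{eqv}) gives the right-hand sides via \eqref{var1} and \eqref{var2}. On the left, a direct expansion using $f(z)=z+a_{2}z^{2}+a_{3}z^{3}+\cdots$ and $g(w)=w-a_{2}w^{2}+(2a_{2}^{2}-a_{3})w^{3}+\cdots$ yields
\[
(1-\alpha)\tfrac{zf'}{f}+\alpha\!\left(1+\tfrac{zf''}{f'}\right)=1+(1+\alpha)a_{2}z+\bigl((2+4\alpha)a_{3}-(1+3\alpha)a_{2}^{2}\bigr)z^{2}+\cdots,
\]
and the corresponding $g$-expansion has coefficients $-(1+\alpha)a_{2}$ and $(3+5\alpha)a_{2}^{2}-(2+4\alpha)a_{3}$. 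Equating with \eqref{var1}--\eqref{var2} produces four relations exactly analogous to \eqref{eq2.1n}--\eqref{eq2.4n}, with the factors $(1+2\alpha),(2+6\alpha),(3+10\alpha)$ there replaced by $(1+\alpha),(2+4\alpha),(3+5\alpha)$.

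The first and third relations immediately force $c_{1}=-b_{1}$. Adding the second and fourth eliminates $a_{3}$: using $b_{1}^{2}=c_{1}^{2}$ together with $(1+\alpha)a_{2}=\tfrac{1}{2}B_{1}c_{1}$ (so that $c_{1}^{2}=4(1+\alpha)^{2}a_{2}^{2}/B_{1}^{2}$), the $(B_{2}-B_{1})c_{1}^{2}$ terms collapse into an $a_{2}^{2}$-contribution, and after collecting one obtains
\[
a_{2}^{2}=\frac{B_{1}^{3}(b_{2}+c_{2})}{4(1+\alpha)\bigl[B_{1}^{2}+(1+\alpha)(B_{1}-B_{2})\bigr]}.
\]
Applying $|b_{2}|,|c_{2}|\le2$ and taking square roots (valid since $B_{1}>0$) yields \eqref{re5n}.

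For $a_{3}$ the cleanest route is to substitute the above expression for $a_{2}^{2}$ (obtained by adding) back into the second relation, then solve; the coefficient of $c_{2}$ combines as $\frac{B_{1}}{4(1+\alpha)(2+4\alpha)}\bigl[(1+3\alpha)+2(1+\alpha)\bigr]=\frac{(3+5\alpha)B_{1}}{4(1+\alpha)(2+4\alpha)}$, the coefficient of $b_{2}$ is $\frac{(1+3\alpha)B_{1}}{4(1+\alpha)(2+4\alpha)}$, and the $c_{1}^{2}$-coefficient simplifies to $\frac{B_{2}-B_{1}}{4(1+\alpha)}$. Applying $|b_{2}|,|c_{2}|\le2$ and $|c_{1}|\le2$ with the triangle inequality, the $B_{1}$-part becomes $\frac{2B_{1}(4+8\alpha)}{4(1+\alpha)(2+4\alpha)}=\frac{B_{1}}{1+\alpha}$ after the $1+2\alpha$ factors cancel, and the $(B_{2}-B_{1})$-part becomes $\frac{|B_{2}-B_{1}|}{1+\alpha}$, producing \eqref{re6n}.

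The only real obstacle is bookkeeping: the new weights $(1+\alpha),(2+4\alpha),(1+3\alpha),(3+5\alpha)$ must combine so that the $(1+2\alpha)$ factors in the $a_{3}$ estimate cancel exactly, delivering the clean denominator $1+\alpha$. The rest is a transcription of the template established in Theorem~\ref{the2}.
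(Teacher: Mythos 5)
Your proposal is correct and follows essentially the same route as the paper: the same series expansions with coefficients $(1+\alpha)a_2$ and $2(1+2\alpha)a_3-(1+3\alpha)a_2^2$ (and their $g$-counterparts), the same four-equation system, $c_1=-b_1$, addition to get $a_2^2=\frac{B_1^3(b_2+c_2)}{4(1+\alpha)[B_1^2+(1+\alpha)(B_1-B_2)]}$, and back-substitution yielding exactly the paper's expression $a_3=\frac{(B_1/2)((1+3\alpha)b_2+(3+5\alpha)c_2)+b_1^2(1+2\alpha)(B_2-B_1)}{4(1+\alpha)(1+2\alpha)}$ before applying $|b_i|,|c_i|\le 2$. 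All of your intermediate coefficients check out, so this is a faithful reconstruction of the paper's argument.
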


\begin{proof}
If  $f\in \mathcal{M}_{\sigma}(\alpha, \varphi)$, then there are
analytic
  functions $u, v : \mathbb{D} \rightarrow \mathbb{D}$, with $u(0)= v(0
)= 0$, such that
  \begin{align}\label{th3na}
     (1-\alpha)\frac{zf'(z)}{f(z)}+\alpha\left(1+\frac{ zf''(z)}{f'(z)}\right)&=
     \varphi(u(z))
    \intertext{and}     (1-\alpha) \frac{wg'(w)}{g(w)} + \alpha\left(1+\frac{ w g''(w)}{g'(w)}\right) &=
    \varphi(v(w)).\label{th3nb}
  \end{align}
  Since
  \[ (1-\alpha)\frac{zf'(z)}{f(z)}+\alpha\left(1+\frac{ zf''(z)}{f'(z)}\right)
  = 1 + (1+\alpha) a_2 z +(2(1+2\alpha) a_3 - (1+3\alpha)a_2^2)z^2 +
     \cdots \]
  and
  \[(1-\alpha) \frac{wg'(w)}{g(w)} + \alpha\left(1+\frac{ w g''(w)}{g'(w)}\right)
   = 1 - (1+\alpha) a_2 w +((3+5\alpha) a_2^2 - 2(1+2\alpha) a_3)w^2 +
     \cdots, \]
  from {\eqref{var1}}, {\eqref{var2}},  {\eqref{th3na}} and {\eqref{th3nb}},
  it follows that
  \begin{equation}
    \label{eq3.1n} (1+\alpha) a_2 = \frac{1}{2} B_1 c_1,
  \end{equation}
  \begin{equation}
    \label{eq3.2n} 2(1+2\alpha) a_3 - (1+3\alpha) a_2^2 = \frac{1}{2} B_1 \left(c_2 -
    \frac{c_1^2}{2} \right)+ \frac{1}{4} B_2 c_1^2,
  \end{equation}
  \begin{equation}
    \label{eq3.3n} - (1+\alpha) a_2 = \frac{1}{2} B_1 b_1
  \end{equation}
  and
  \begin{equation}
    \label{eq3.4n}  (3+5\alpha) a_2^2 - 2(1+2\alpha) a_3 = \frac{1}{2} B_1 \left(b_2 -
    \frac{b_1^2}{2} \right)+ \frac{1}{4} B_2 b_1^2 .
  \end{equation}
 The equations {\eqref{eq3.1n}} and {\eqref{eq3.3n}} yield
  \begin{equation}
    \label{eq3.5n} c_1 = - b_1.
  \end{equation}
  From {\eqref{eq3.2n}}, {\eqref{eq3.4n}} and {\eqref{eq3.5n}}, it
  follows that
  \[ a_2^2 = \frac{B_1^3(b_2 + c_2)}{4(1+\alpha)(B_1^2 + (1+\alpha)( B_1 -  B_2)}, \]
  which yields the desired estimate on $|a_2 |$ as described in
  {\eqref{re5n}}.

As in the earlier proofs, use of {\eqref{eq3.2n}}, \eqref{eq3.3n}, {\eqref{eq3.4n}} and \eqref{eq3.5n}
shows  that
  \[ a_3 =  \frac{({B_1}/2)((1+3\alpha)b_2+(3+5\alpha)c_2)+b_1^2(1+2\alpha)(B_2 -
  B_1)}{4(1+\alpha)(1+2\alpha)},
  \]
which yields the estimate {\eqref{re6n}}.
\end{proof}

For $\alpha=0$, Theorem~\ref{the3.1} gives the coefficient estimates for  Ma-Minda bi-starlike functions, while for $\alpha=1$, it gives the
following estimates for Ma-Minda bi-convex functions.
\begin{corollary}
  Let $f$ given by \eqref{varphi1} be in the class $\mathcal{CV}_{\sigma}
(\varphi)$. Then
  \begin{equation*}
  |a_2 | \leq \frac{B_1\sqrt{B_1}}{\sqrt{2|B_1^2 + 2 B_1 - 2
    B_2|}}\quad \text{and}\quad
    |a_3 | \leq \frac{1}{2}(B_1 + |B_2 - B_1 |).
  \end{equation*}
\end{corollary}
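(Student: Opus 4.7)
The plan is to observe that the corollary is a direct specialization of Theorem~\ref{the3.1} at the parameter value $\alpha=1$, so the proof is essentially a one-line substitution plus verification that the class identification is correct.

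First I would unpack the defining subordinations of $\mathcal{M}_{\sigma}(\alpha,\varphi)$ at $\alpha=1$. The expression $(1-\alpha)\frac{zf'(z)}{f(z)} + \alpha\bigl(1 + \frac{zf''(z)}{f'(z)}\bigr)$ collapses to $1 + \frac{zf''(z)}{f'(z)}$, and analogously for the inverse $g$. Therefore $f \in \mathcal{M}_{\sigma}(1,\varphi)$ precisely when both $1 + zf''(z)/f'(z) \prec \varphi(z)$ and $1 + wg''(w)/g'(w) \prec \varphi(w)$ hold, which is the definition of $\mathcal{CV}_{\sigma}(\varphi)$. Hence $\mathcal{CV}_{\sigma}(\varphi) = \mathcal{M}_{\sigma}(1,\varphi)$.

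Next I would substitute $\alpha=1$ into the estimates \eqref{re5n} and \eqref{re6n} of Theorem~\ref{the3.1}. The denominator factor $(1+\alpha)$ becomes $2$ and the bracket $(1+\alpha)(B_1-B_2)$ becomes $2(B_1-B_2)$, giving
\[
|a_2| \leq \frac{B_1\sqrt{B_1}}{\sqrt{2\,|B_1^2 + 2B_1 - 2B_2|}},
\]
while \eqref{re6n} becomes $|a_3| \leq \tfrac{1}{2}(B_1 + |B_2 - B_1|)$, exactly as stated.

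There is no real obstacle here: the entire content of the corollary is already packaged in Theorem~\ref{the3.1}, and the only small verification needed is the equality of the two classes at $\alpha=1$, which is immediate from the definitions. The corollary could therefore be proved in a single sentence appealing to Theorem~\ref{the3.1}.
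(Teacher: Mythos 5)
Your proposal is correct and matches the paper exactly: the paper also obtains this corollary by setting $\alpha=1$ in Theorem~\ref{the3.1}, noting that $\mathcal{M}_{\sigma}(1,\varphi)$ coincides with $\mathcal{CV}_{\sigma}(\varphi)$. The substitutions into \eqref{re5n} and \eqref{re6n} are carried out correctly.
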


\begin{remark}
  For $\varphi$ given by
  \[ \varphi(z)= \frac{1 +(1 - 2 \gamma)z}{1 - z} = 1 + 2(1 - \gamma
)z + 2(1 - \gamma)z^2 + \cdots, \]
  evidently $B_1 = B_2 = 2(1 - \gamma)$, and thus when $\alpha=1$ (bi-convex functions), the inequalities in
  {\eqref{re5n}} and {\eqref{re6n}} reduce to a result in {\cite[Theorem 4.1]{brantaha}}.
\end{remark}

Next, function $f \in \sigma$ is said to be in  the class
$\mathcal{L}_{\sigma}(\alpha, \varphi)$, $\alpha\geq0$, if the
following subordinations  hold:
  \[ \left(\frac{zf'(z)}{f(z)}\right)^\alpha \left(1+\frac{ zf''(z)}{f'(z)}\right)^{1-\alpha} \prec \varphi(z)\] and
   \[ \left(\frac{wg'(w)}{g(w)}\right)^\alpha \left(1+\frac{ w
   g''(w)}{g'(w)}\right)^{1-\alpha}
  \prec \varphi(w),\] $g(w): = f^{- 1}(w)$.
 This class also reduces to the classes of Ma-Minda
bi-starlike and bi-convex functions. For functions in this class, the following coefficient estimates are obtained.

\begin{theorem}
  Let $f$ given by \eqref{varphi1} be in the class $\mathcal{L}_{\sigma}(\alpha, \varphi)$. Then
\begin{equation}
    \label{re5n2} |a_2 | \leq
  \frac{2B_1\sqrt{B_1}}{\sqrt{|2(\alpha^2-3\alpha+4)
    B_1^2+4(\alpha-2)^2( B_1 - B_2)|}}
\end{equation}
  and
\begin{equation}
    \label{re6n2} |a_3 | \leq \frac{2(3-2\alpha)\Big(B_1 +|B_1 - B_2 |\Big)}
    {|(3-2\alpha)(\alpha^2-3\alpha+4)|}.
  \end{equation}
\end{theorem}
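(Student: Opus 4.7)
The plan is to mirror the machinery used in the proofs of Theorems~\ref{the2} and \ref{the3.1}. By the definition of $\mathcal{L}_{\sigma}(\alpha,\varphi)$, there exist analytic $u,v:\mathbb{D}\to\mathbb{D}$ with $u(0)=v(0)=0$ such that
\[
\Bigl(\tfrac{zf'(z)}{f(z)}\Bigr)^\alpha\!\Bigl(1+\tfrac{zf''(z)}{f'(z)}\Bigr)^{1-\alpha}=\varphi(u(z)),\qquad
\Bigl(\tfrac{wg'(w)}{g(w)}\Bigr)^\alpha\!\Bigl(1+\tfrac{wg''(w)}{g'(w)}\Bigr)^{1-\alpha}=\varphi(v(w)).
\]
Introducing $p_1,p_2$ as in \eqref{equ}--\eqref{eqv} rewrites the right-hand sides via the expansions \eqref{var1}--\eqref{var2}, so the entire task is reduced to Taylor-expanding the left-hand sides and matching coefficients of $z$, $z^2$ and $w$, $w^2$.

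The principal new computation is this expansion. Starting from $zf'(z)/f(z)=1+a_2z+(2a_3-a_2^2)z^2+\cdots$ and $1+zf''(z)/f'(z)=1+2a_2z+(6a_3-4a_2^2)z^2+\cdots$, one multiplies the two series raised to the exponents $\alpha$ and $1-\alpha$ (most cleanly through $\exp(\alpha\log(\cdot)+(1-\alpha)\log(\cdot))$ to second order) to obtain
\[
\Bigl(\tfrac{zf'}{f}\Bigr)^\alpha\!\Bigl(1+\tfrac{zf''}{f'}\Bigr)^{1-\alpha}=1+(2-\alpha)a_2z+\Bigl[(6-4\alpha)a_3+\tfrac{\alpha^2+5\alpha-8}{2}a_2^2\Bigr]z^2+\cdots.
\]
Substituting $a_2\mapsto-a_2$ and $a_3\mapsto 2a_2^2-a_3$ yields the $g=f^{-1}$ expansion, whose $w^2$-coefficient is $\tfrac{\alpha^2-11\alpha+16}{2}a_2^2-(6-4\alpha)a_3$. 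Matching against \eqref{var1}--\eqref{var2} gives four equations parallel to \eqref{eq3.1n}--\eqref{eq3.4n}. The linear ones force $c_1=-b_1$; adding the two quadratic ones (and using $c_1^2=b_1^2$) gives
\[
(\alpha^2-3\alpha+4)a_2^2=\tfrac{1}{2}\bigl[B_1(b_2+c_2)+(B_2-B_1)c_1^2\bigr],
\]
into which $c_1^2=4(2-\alpha)^2a_2^2/B_1^2$ (from the linear equation) is substituted to extract $a_2^2$ in closed form; applying $|b_2|,|c_2|\le 2$ then delivers \eqref{re5n2}.

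For \eqref{re6n2}, subtracting the two quadratic equations (the $(B_2-B_1)c_1^2$ pieces cancel) yields $a_3-a_2^2=B_1(c_2-b_2)/[8(3-2\alpha)]$; combining this with the \emph{un}-substituted summed equation expresses $a_3$ as a linear combination of $b_2$, $c_2$, and $c_1^2$, with $B_1$-coefficients $(\alpha^2-11\alpha+16)$ on $c_2$ and $(8-5\alpha-\alpha^2)$ on $b_2$, all over the denominator $8(3-2\alpha)(\alpha^2-3\alpha+4)$. Bounding $|b_2|,|c_2|,|c_1|\le 2$ and invoking the algebraic identity
\[
|\alpha^2-11\alpha+16|+|\alpha^2+5\alpha-8|=8(3-2\alpha),
\]
valid in the natural range where $\alpha^2-11\alpha+16>0$ and $\alpha^2+5\alpha-8<0$, collapses the $B_1$-contribution to $16(3-2\alpha)B_1$ and the cross-term to $16(3-2\alpha)|B_2-B_1|$, producing \eqref{re6n2}. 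The main obstacle is getting the second-order expansion of the fractional-exponent product $S^\alpha C^{1-\alpha}$ correct; the secondary subtlety is recognizing the telescoping absolute-value identity without which the $|a_3|$ bound would not simplify.
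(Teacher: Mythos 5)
Your proposal is correct and follows essentially the same route as the paper: the same subordination set-up, the same second-order expansions (your $z^2$-coefficients $(6-4\alpha)a_3+\tfrac{\alpha^2+5\alpha-8}{2}a_2^2$ and $\tfrac{\alpha^2-11\alpha+16}{2}a_2^2-(6-4\alpha)a_3$ agree with the paper's \eqref{eq3.2n2} and \eqref{eq3.4n2}), the same four matched equations, and the same add/subtract manipulations yielding the paper's closed forms for $a_2^2$ and $a_3$. You are in fact more explicit than the paper about the final step for \eqref{re6n2}, where the paper silently uses the telescoping identity $|\alpha^2-11\alpha+16|+|\alpha^2+5\alpha-8|=8(3-2\alpha)$ that you state (and correctly restrict to the range where both sign conditions hold).
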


\begin{proof}
  Let $f\in \mathcal{L}_{\sigma}(\alpha, \varphi)$.
Then there are analytic
  functions $u, v : \mathbb{D} \rightarrow \mathbb{D}$, with $u(0)= v(0
)= 0$, such that
  \begin{align}  \label{th3n2a}
     \left(\frac{zf'(z)}{f(z)}\right)^\alpha \left(1+\frac{ zf''(z)}{f'(z)}\right)^{1-\alpha} &=
     \varphi(u(z))
        \intertext{and}    \label{th3n2b}
        \left(\frac{wg'(w)}{g(w)}\right)^\alpha \left(1+\frac{ w
   g''(w)}{g'(w)}\right)^{1-\alpha} &=
    \varphi(v(w)).
  \end{align}
  Since
  \[ \left(\frac{zf'(z)}{f(z)}\right)^\alpha \left(1+\frac{ zf''(z)}{f'(z)}\right)^{1-\alpha} = 1 + (2-\alpha) a_2 z +\left(2(3-2\alpha) a_3
  +
  \frac{(\alpha-2)^2 -3(4-3\alpha)}{2}a_2^2\right)z^2 +
     \cdots \]
  and
  \[\left(\frac{wg'(w)}{g(w)}\right)^\alpha \left(1+\frac{ w
   g''(w)}{g'(w)}\right)^{1-\alpha} = 1 - (2-\alpha) a_2 w +\Big((8(1-\alpha)+\frac{1}{2}\alpha(\alpha+5))a_2^2 - 2(3-2\alpha) a_3\Big)w^2 +
     \cdots, \]
  from {\eqref{var1}}, {\eqref{var2}}, {\eqref{th3n2a}} and {\eqref{th3n2b}},
  it follows that
  \begin{equation}
    \label{eq3.1n2} (2-\alpha) a_2 = \frac{1}{2} B_1 c_1,
  \end{equation}
  \begin{equation}
    \label{eq3.2n2} 2(3-2\alpha) a_3 +\Big((\alpha-2)^2-3(4-3\alpha)\Big) \frac{a_2^2}{2} = \frac{1}{2} B_1 \left(c_2 -
    \frac{c_1^2}{2} \right)+ \frac{1}{4} B_2 c_1^2,
  \end{equation}
  \begin{equation}
    \label{eq3.3n2} - (2-\alpha) a_2 = \frac{1}{2} B_1 b_1
  \end{equation}
  and
  \begin{equation}
    \label{eq3.4n2}  \Big(8(1-\alpha)+\frac{1}{2}\alpha(\alpha+5)\Big)a_2^2 - 2(3-2\alpha) a_3 = \frac{1}{2} B_1 \left(b_2 -
    \frac{b_1^2}{2} \right)+ \frac{1}{4} B_2 b_1^2 .
  \end{equation}
  Now {\eqref{eq3.1n2}} and {\eqref{eq3.3n2}} clearly yield
  \begin{equation}
    \label{eq3.5n2} c_1 = - b_1.
  \end{equation}
  Equations {\eqref{eq3.2n2}}, {\eqref{eq3.4n2}} and {\eqref{eq3.5n2}} lead to
  \[ a_2^2 = \frac{B_1^3(b_2 + c_2)}{2(\alpha^2-3\alpha+4)B_1^2+4(\alpha-2)^2( B_1 - B_2)}, \]
  which yields the desired estimate on $|a_2 |$ as asserted in
  {\eqref{re5n2}}.

  Proceeding similarly as in the earlier proof, using {\eqref{eq3.2n2}}, \eqref{eq3.3n2}, {\eqref{eq3.4n2}} and \eqref{eq3.5n2},
 it follows that
  \[ a_3 =
  \frac{({B_1}/2)\Big((16(1-\alpha)+\alpha(\alpha+5))c_2+(3(4-3\alpha)-(\alpha-2)^2)b_2\Big)
  +2b_1^2(3-2\alpha)(B_1 - B_2)}{4(3-2\alpha)(\alpha^2-3\alpha+4)}
  \]
  which yields the estimate {\eqref{re6n2}}.
\end{proof}

\begin{remark}
The determination of  the sharp estimates for the coefficients
$|a_2|$, $|a_3|$ and for other coefficients of functions belonging
to the classes investigated in this paper  are open problems. In
fact, some estimate (not necessarily sharp) for $|a_n|$, $(n\geq 4)$
would be interesting.
\end{remark}

\end{document}